\newcommand{\fdim}{\dim^*}
\renewcommand{\v}{\mathbf}
\newcommand{\tpt}{\mathbf{2}+\mathbf{2}}
\theoremstyle{plain}
\newtheorem{theorem}{Theorem}
\newtheorem{lemma}[theorem]{Lemma}
\newtheorem{proposition}[theorem]{Proposition}
\theoremstyle{definition}
\newtheorem{definition}[theorem]{Definition}
\newtheorem{conjecture}[theorem]{Conjecture}
\theoremstyle{remark}
\title{The proof of the removable pair conjecture for fractional dimension}
\author{Csaba Bir\'o}
\address{Department of Mathematics, University of Louisville, Louisville KY
40292, USA} \email{csaba.biro@louisville.edu}
\author{Peter Hamburger}
\address{Department of Mathematics, Western Kentucky
University, Bowling Green, KY 42101, USA}
\email{peter.hamburger@wku.edu}
\author{Attila P\'or}
\address{Department of Mathematics, Western Kentucky
University, Bowling Green, KY 42101, USA} \email{attila.por@wku.edu}
\author{Csaba Bir\'o\\
\small Department of Mathematics\\[-0.8ex]
\small University of Louisville\\[-0.8ex]
\small Louisville, KY 40292, U.S.A.\\
\small\tt csaba.biro@louisville.edu\\
\and
Peter Hamburger\\
\small Department of Mathematics\\[-0.8ex]
\small Western Kentucky University\\[-0.8ex]
\small Bowling Green, KY 42101, U.S.A.\\
\small\tt peter.hamburger@wku.edu\\
\and
Attila P\'or\\
\small Department of Mathematics\\[-0.8ex]
\small Western Kentucky University\\[-0.8ex]
\small Bowling Green, KY 42101, U.S.A.\\
\small\tt attila.por@wku.edu\\}
\date{\dateline{Jan 4, 2014}{not yet}\\
\small Mathematics Subject Classifications: 06A10}
\begin{document}

\ifthenelse{\boolean{amsart}}{}{\maketitle}

\begin{abstract}
In 1971 Trotter (or Bogart and Trotter) conjectured that every finite poset on
at least $3$ points has a pair whose removal does not decrease the dimension by
more than $1$. In 1992 Brightwell and Scheinerman introduced fractional
dimension of posets, and they made a similar conjecture for fractional
dimension. This paper settles this latter conjecture.
\end{abstract}

\ifthenelse{\boolean{amsart}}{\maketitle}{}

\section{Introduction}

\subsection{Dimension of posets}
Let $P$ be a poset. A set of its linear extensions $\{L_1,\ldots,L_d\}$ forms a
\emph{realizer}, if $L_1\cap\cdots\cap L_d=P$. The minimum cardinality of a
realizer is called the \emph{dimension} of the poset $P$, denoted by $\dim(P)$. This concept is also
sometimes called the \emph{order dimension} or the \emph{Dushnik-Miller
dimension of the partial order} as it was introduced in \cite{Dus-Mil-41}.

The dimension is ``continuous'' in the sense that the removal of a point can
never decrease the dimension by more than $1$.  If the removal of a pair of
points decreases the dimension by at most $1$, such a pair is called a
\emph{removable pair}. The following conjecture has become known as the
\emph{Removable Pair Conjecture}.
\begin{conjecture}[\cite{Tro-CAPOS}, pp.\ 26]
Every poset on at least $3$ point has a removable pair.
\end{conjecture}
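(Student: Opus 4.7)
The plan is to proceed by induction on $|P|$, after first reducing to the case where $P$ is \emph{critical}, meaning $\dim(P - x) = \dim(P) - 1$ for every $x \in P$. If $P$ is not critical, pick $x$ with $\dim(P - x) = \dim(P)$ and apply the inductive hypothesis to $P - x$ to obtain a pair $\{y,z\}$ with $\dim((P-x) - \{y,z\}) \geq \dim(P-x) - 1 = \dim(P) - 1$. Since $\dim(P - \{y,z\}) \geq \dim((P-x) - \{y,z\})$, the pair $\{y,z\}$ is already removable in $P$. So the real content is the critical case.

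Assume now $P$ is critical with $\dim(P) = n$ and $|P| \geq 3$. I would try to find a point $x$ such that the $(n-1)$-dimensional poset $P - x$ is \emph{not} critical; if so, there exists $y \in P - x$ with $\dim(P - \{x,y\}) = \dim(P - x) = n - 1$, producing a removable pair. Call $P$ \emph{totally critical} if every iterated single-point deletion yields a critical poset. The conjecture then reduces to showing that no totally critical poset on $\geq 3$ points exists, equivalently that every critical $P$ contains a point whose removal destroys criticality.

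To attack this reduction, I would bring in the theory of critical pairs: incomparable pairs $(a,b)$ such that every $d < a$ is incomparable to $b$ and every $c > b$ is incomparable to $a$. Every realizer must reverse each critical pair, and $\dim(P)$ is bounded below by the chromatic number of the \emph{reverse-critical-pair hypergraph}. In a critical poset, each point must meet critical pairs in a highly balanced way, and I would try double counting, or an averaging/entropy argument over the realizer, to show that some point $x$ lies in only ``redundant'' critical pairs, so that a realizer of size $n-1$ for $P - x$ can be extended to one for $(P - x) - y$ for an appropriate $y$. Concretely, one would look for a point $x$ whose critical pairs are dominated (in a set-covering sense) by critical pairs not involving $x$, and then exhibit $y$ as a minimal witness of the remaining drop.

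The main obstacle is precisely what distinguishes the integer conjecture from the fractional version settled in the present paper: $\dim(\cdot)$ does not admit the LP/convex relaxation that makes a weight-shifting argument work for $\fdim$. A single linear extension in a realizer can be responsible for reversing many unrelated critical pairs at once, so deleting a single point does not ``release'' a predictable amount of realizer budget. As a consequence, the reduction to ``totally critical posets of size $\geq 3$ do not exist'' may well be a restatement rather than a simplification; all proofs known to me (bounded height, bounded width, interval and semiorders, removal of a minimal or maximal element of small up-/down-set) exploit special structure unavailable in the general case, and bridging that gap is where I expect the real work to lie.
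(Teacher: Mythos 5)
The first thing to note is that this statement is labeled a \emph{conjecture} in the paper, and the paper does not prove it: the Removable Pair Conjecture for ordinary (integer) dimension remains open. The paper's actual contribution is the fractional analogue, proved via a lemma on interval orders together with a $\tpt$ argument in which two f-realizers and two Bogart-style linear extensions are each given weight $\frac{1}{2}$ --- an averaging move that has no counterpart for integer dimension, as you yourself observe. So there is no proof in the paper to compare yours against, and any complete, correct proof you produced would be a major new result rather than a rediscovery.

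Judged on its own terms, your proposal is not a proof. The reduction to critical posets in your first paragraph is correct and standard: since dimension is monotone under taking subposets, a removable pair of $P-x$ with $\dim(P-x)=\dim(P)$ is already removable in $P$. The further observation that it suffices to find, in a critical poset $P$, a point $x$ for which $P-x$ is again not critical is also correct, but as you candidly note it is essentially a restatement of the conjecture restricted to critical posets, not a simplification. Everything after that --- the double counting, the averaging over realizers, the hoped-for point whose critical pairs are ``dominated'' by critical pairs not involving it --- is a description of where a proof might come from, not an argument: no mechanism is given for producing such a point, no candidate for the witness $y$ is identified, and no inequality is actually derived. Your diagnosis of the obstruction (a single linear extension can reverse many critical pairs at once, so deleting a point releases no quantifiable budget) is accurate, but it is a diagnosis of the gap, not a way across it. The missing step is the entire core of the problem.
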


The origins of the conjecture are not entirely clear. It appeared in print in a
1975 paper by Trotter \cite{Tro-75}, but according to Trotter \cite{Trotter}, it
was probably formulated at the 1971 summer conference on combinatorics held at
Bowdoin College, and should be credited either to Trotter or to Bogart and
Trotter.

Let $P$ denote a poset. In the following, $x\|y$ denotes that $x$ is
incomparable to $y$ in $P$. We will also use the following standard notations.
\begin{gather*}
D(x)=\{y\in P: y<x\}\qquad D[x]=D(x)\cup\{x\}\\
U(x)=\{y\in P: y>x\}\qquad U[x]=U(x)\cup\{x\}\\
I(x)=\{y\in P: y\|x\}\\
\min(P)=\{x\in P:D(x)=\emptyset\}\qquad\max(P)=\{x\in P: U(x)=\emptyset\}
\end{gather*}
\begin{definition}
An ordered pair of vertices $(x,y)$ is called a \emph{critical pair}, if
$x\|y$, $D(x)\subseteq D(y)$, and $U(y)\subseteq U(x)$. A linear extension $L$
\emph{reverses} the critical pair $(x,y)$ if $y<x$ in $L$. A set of linear
extensions \emph{reverses} a critical pair, if the pair is reversed in at least
one of the linear extensions.
\end{definition}
The following proposition expresses that the critical pairs are the only
significant incomparable pairs for constructing realizers.
\begin{proposition}[\cite{Rab-Riv-79}]
A set of linear extensions is a realizer if and only if it reverses every critical
pair.
\end{proposition}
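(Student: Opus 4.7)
The plan is to prove the two directions separately. The forward direction is immediate: if $\{L_1,\ldots,L_d\}$ is a realizer and $(x,y)$ is a critical pair, then $x\|y$ in $P$, so $y<x$ cannot fail in every $L_i$ (otherwise $x<y$ would appear in the intersection $L_1\cap\cdots\cap L_d=P$, contradicting $x\|y$). Hence at least one $L_i$ reverses $(x,y)$.

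For the reverse direction, I would argue by contrapositive. Suppose $\{L_1,\ldots,L_d\}$ is a set of linear extensions whose intersection strictly contains $P$; then there exist $x,y\in P$ with $x\|y$ in $P$ but $x<y$ in every $L_i$. The goal is to manufacture from $(x,y)$ a critical pair $(x^*,y^*)$ that is still not reversed by any $L_i$. The construction is iterative: starting from $(x_0,y_0)=(x,y)$, at each step check whether $D(x_i)\subseteq D(y_i)$ and $U(y_i)\subseteq U(x_i)$. If $D(x_i)\not\subseteq D(y_i)$, pick $z\in D(x_i)\setminus D(y_i)$ and replace $x_i$ by $z$; symmetrically, if $U(y_i)\not\subseteq U(x_i)$, pick $w\in U(y_i)\setminus U(x_i)$ and replace $y_i$ by $w$.

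The key invariants to check at each step are (i) $x_i\|y_i$ in $P$, and (ii) $x_i\le x$ and $y_i\ge y$ in $P$. For (i), if we replace $x_i$ by $z<x_i$ with $z\not<y_i$, then $z=y_i$ would give $y_i<x_i$ and $z>y_i$ would give $y_i<z<x_i$, both contradicting $x_i\|y_i$; thus $z\|y_i$. The upward replacement is symmetric. Invariant (ii) is immediate from the definition of the update. Combining (ii) with the assumption that $x<y$ in every $L_i$, we get $x_i\le x<y\le y_i$ in every $L_i$, so no $L_i$ reverses $(x_i,y_i)$.

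Since $P$ is finite and the chain $x=x_0\ge x_1\ge\cdots$ (resp.\ $y=y_0\le y_1\le\cdots$) lives inside $P$, the process must terminate. It can only terminate when $D(x^*)\subseteq D(y^*)$ and $U(y^*)\subseteq U(x^*)$, i.e., when $(x^*,y^*)$ is a critical pair. By invariant (ii) this critical pair is not reversed by any $L_i$, contradicting the hypothesis. The main care point is the termination/well-foundedness of the alternating procedure and verifying that neither replacement destroys the properties established by the other; both issues are handled by the observations above, so no serious obstacle arises.
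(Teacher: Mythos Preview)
The paper does not prove this proposition; it simply cites it as a known result of Rabinovitch and Rival \cite{Rab-Riv-79} and moves on. So there is no ``paper's own proof'' to compare against.

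Your argument is correct and is essentially the standard one. The forward direction is clear. For the reverse direction, your descent on $x$ combined with ascent on $y$ is exactly the usual way to extract a critical pair from an incomparable pair that is not reversed. Two small remarks: first, termination is guaranteed simply because $(x_i)$ is weakly decreasing in $P$, $(y_i)$ is weakly increasing in $P$, at least one changes strictly at each step, and all pairs lie in the finite set $D[x]\times U[y]$; you do not actually need to worry about one replacement ``undoing'' the other, since monotonicity alone forces the process to halt. Second (and this is a bonus, not a gap), replacing $x_i$ by some $z<x_i$ enlarges $U(x_i)$ to $U(z)\supseteq U(x_i)$, so the condition $U(y_i)\subseteq U(x_i)$, once achieved, is preserved under further $x$-moves, and symmetrically for $y$-moves --- so the two replacements in fact cooperate rather than interfere.
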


\subsection{Fractional dimension}
Determining the dimension of a poset can be regarded as a linear integer
programming problem. Let $P$ be a poset and $\{L_1,\ldots,L_\ell\}$ the set of
its linear extensions and $\{(a_1,b_1),\ldots,(a_c,b_c)\}$ the set of its
critical pairs. Let $A=[a_{ij}]$ be a $c\times \ell$ binary matrix, where
$a_{ij}=1$ iff $(a_i,b_i)$ is reversed in $L_\ell$. The following integer
program gives the dimension of $P$.
\begin{gather*}
A\v{x}\geq\v{1}\\
\v{x}\geq\v{0}\\
\v{x}\in\mathbb{Z}^\ell\\
\text{min }\v{1}^T\v{x}
\end{gather*}

In 1992 Brightwell and Scheinerman \cite{Bri-Sch-92} introduced the notion of
fractional dimension of posets as the optimal solution of the linear relaxation
of this integer program. They used the notation $\mathrm{fdim}(P)$ for
fractional dimension, but to keep the notation consistent with fractional graph
theory, we will use $\fdim(P)$. A feasible solution of the linear program will
be called an \emph{f-realizer}.

If we consider the $\ell$-dimensional vector space generated by the abstract
basis $L_1,\ldots,L_\ell$, then an
f-realizer is a linear combination $\sum\alpha_i L_i$ with
$0\leq\alpha_i\leq 1$, where for all
critical pairs $(x,y)$, we have $\sum_{i:y<x\text{ in }L_i}\alpha_i\geq 1$.
In fact, for a linear combination $\sum\alpha_i L_i$, we will say that it
reverses the critical pair $(x,y)$ $\alpha$ times, if
$\sum_{i:y<x\text{ in }L_i}\alpha_i=\alpha$. If a critical pair is reversed at
least once ($1$ times), we will simply say it is reversed. This way, a linear
combination is an f-realizer if and only if it reverses every critical pair. The
\emph{weight} of an f-realizer is $\sum_{i=1}^\ell\alpha_i$, and the fractional
dimension of $P$ is the minimum weight of an f-realizer.

Clearly, for all posets $P$, $\fdim(P)\leq\dim(P)$, but as shown by Brightwell
and Scheinerman \cite{Bri-Sch-92} (with all the other results in this
paragraph), the ordinary dimension and the fractional dimension can be
arbitrarily far apart.  Nevertheless, there exist posets with arbitrarily large
fractional dimension.  The continuous property translates exactly for
fractional dimension: for any element $x\in P$, $\fdim(P-x)\geq \fdim(P)-1$.

In their paper Brightwell and Scheinerman conjectured that the fractional
version of the Removable Pair Conjecture holds: there is always a pair of
points that decreases the fractional dimension by at most one. In 1994, Felsner
and Trotter \cite{Fel-Tro-94} suggested a weakening of the question: is there
an absolute constant $\varepsilon>0$ so that any poset with $3$ or more points
always contains a pair whose removal decreases the fractional dimension by at
most $2-\varepsilon$? In this paper we prove the Brightwell--Scheinerman 
conjecture, which is of course equivalent to the Felsner--Trotter conjecture
with $\varepsilon=1$.

We would like to note here that the original definition of fractional dimension
by Brightwell and Scheinerman is different from what we introduced above. For
completeness, let us give their original, equivalent definition here. A
\emph{$k$-fold realizer} is a multiset of linear extensions such that for any
incomparable pair $x,y$ there exists at least $k$ linear extensions (with
multiplicity) in the multiset in which $x<y$, and there exists at
least $k$ other linear extensions in which $x>y$. Let $t(k)$ be the minimum
cardinality of a $k$-fold realizer. The fractional dimension of the poset is
$\lim_{k\to\infty}t(k)/k=\inf t(k)/k$.

\subsection{Interval orders}

Let $P$ be a poset, and suppose that there is a map $f$ from $P$ to the set of
closed intervals of the real line, so that $x < y$ in $P$ if and only if the
right endpoint of $f(x)$ is less than (in the real number system) the left endpoint of
$f (y)$. We say that the multiset of intervals $\{f(x):x\in P\}$ is an \emph{interval
representation} of $P$. If a poset has an interval representation, it is an
\emph{interval order}.

The poset $\tpt$ denotes the poset with ground set $\{a_1,b_1,a_2,b_2\}$, where
the only relations are $a_1<b_1$ and $a_2<b_2$. If a poset contains
$\tpt$ it can not be an interval order. In fact this property characterizes
interval orders (see Fishburn \cite{Fis-70}).

\begin{theorem}[\cite{Fis-70}]\label{thm:charint}
A poset is an interval order if and only if it does not contain $\tpt$.
\end{theorem}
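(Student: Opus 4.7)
The forward direction is a short computation. Given an interval representation $f$ and a copy of $\tpt$ on $\{a_1,b_1,a_2,b_2\}$ with $a_i<b_i$ and all other pairs incomparable, the two strict inequalities $\mathrm{right}(f(a_i))<\mathrm{left}(f(b_i))$ combine with the two weak inequalities $\mathrm{right}(f(a_1))\geq\mathrm{left}(f(b_2))$ and $\mathrm{right}(f(a_2))\geq\mathrm{left}(f(b_1))$ (forced by $a_1\|b_2$ and $a_2\|b_1$) into the chain
\[
\mathrm{right}(f(a_1)) \geq \mathrm{left}(f(b_2)) > \mathrm{right}(f(a_2)) \geq \mathrm{left}(f(b_1)) > \mathrm{right}(f(a_1)),
\]
a contradiction.

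For the reverse direction, assume $P$ is $\tpt$-free. The plan is to first establish a structural lemma about principal down-sets and then read off an explicit interval representation from it.

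The structural lemma states that $\{D(x):x\in P\}$ is totally ordered by inclusion. To prove it I would argue by contradiction: if $D(x)$ and $D(y)$ are $\subseteq$-incomparable, pick $a\in D(x)\setminus D(y)$ and $b\in D(y)\setminus D(x)$, and verify by short transitivity arguments that no comparability among $\{a,b,x,y\}$ other than $a<x$ and $b<y$ can hold. For example, $y<a$ together with $a<x$ and $b<y$ would yield $b<x$, contradicting $b\notin D(x)$; the remaining cases $b>x$, $a<b$, $a>b$, $x<y$, $x>y$ are ruled out symmetrically, and the four elements are pairwise distinct for similar reasons. Hence $\{a,x,b,y\}$ is a copy of $\tpt$, contradicting the hypothesis. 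I expect this case analysis to be the conceptual core of the proof, though it is only mildly tedious.

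Granted the lemma, the construction is direct. Enumerate the distinct down-sets as $D_1\subsetneq D_2\subsetneq\cdots\subsetneq D_p$, and for each $x\in P$ set
\[
r(x) = \text{the unique $i$ with $D(x)=D_i$}, \qquad R(x) = \max\{i:x\notin D_i\},
\]
so that $f(x)=[r(x),R(x)]$. Since $x\notin D(x)=D_{r(x)}$, we have $r(x)\leq R(x)$, and $f(x)$ is a valid closed interval. The verification that $x<y$ in $P$ iff $R(x)<r(y)$ is then immediate: if $x<y$ then $x\in D(y)=D_{r(y)}$, so $x\in D_j$ for all $j\geq r(y)$ by monotonicity of the chain, forcing $R(x)<r(y)$; conversely, $R(x)<r(y)$ implies $x\in D_{r(y)}=D(y)$, so $x<y$. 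This produces the desired interval representation of $P$.
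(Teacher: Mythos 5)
Your proof is correct and complete. Note that the paper itself gives no proof of this statement: it is quoted from Fishburn \cite{Fis-70} as a known characterization, so there is no in-paper argument to compare against. Your forward direction is the standard four-inequality contradiction, and your reverse direction is the classical proof of Fishburn's theorem: in a $\tpt$-free poset the principal down-sets $D(x)$ form a chain under inclusion (your case analysis for producing a copy of $\tpt$ from two $\subseteq$-incomparable down-sets checks out, including the degenerate cases where elements coincide), and the assignment $x\mapsto[r(x),R(x)]$ is well defined because $\{i:x\notin D_i\}$ is a nonempty initial segment of indices, which makes the equivalence $x<y\iff R(x)<r(y)$ immediate. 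This matches the definition of interval representation used in the paper, so the argument stands on its own as a self-contained proof of the cited theorem.
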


An appealing property of interval orders is that they admit a
positive answer to the Removable Pair Conjecture.

\begin{theorem}[\cite{Tro-97}]\label{thm:rpio}
Any interval order on at least $3$ points contains a removable pair.
\end{theorem}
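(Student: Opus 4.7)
The plan is to argue by induction on $|P|$. The base case $|P|=3$, and more generally any case with $\dim(P)\le 2$, is immediate: removing any pair of points leaves a non-empty poset of dimension at least $1$, so the drop is at most $1$. Hence I may assume $|P|\ge 4$ and $\dim(P)\ge 3$, and split into two cases according to whether $P$ contains ``twins''.

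\emph{Case 1: $P$ has two distinct elements $x,y$ with $D(x)=D(y)$ and $U(x)=U(y)$.} These must be incomparable. Given any realizer $\{L_1,\dots,L_d\}$ of $P-x$, I would insert $x$ immediately before $y$ in each $L_i$; each result is a linear extension of $P$ (the placement respects $D(x)=D(y)$ and $U(x)=U(y)$), and the critical pairs of $P$ involving $x$ are in natural bijection with the critical pairs of $P-x$ involving $y$ under $x\leftrightarrow y$, reversed by the same index. Thus $\dim(P)=\dim(P-x)$, and for any other element $z$ the continuous property gives $\dim(P-\{x,z\})\ge\dim(P-x)-1=\dim(P)-1$, so $\{x,z\}$ is a removable pair.

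\emph{Case 2: $P$ has no twins.} Invoke Theorem~\ref{thm:charint} and fix an interval representation with all endpoints distinct. Let $x$ have the smallest right endpoint and $y$ the largest left endpoint; then $x\in\min(P)$, $y\in\max(P)$, $U(x)$ is the inclusion-maximum up-set of $P$, $D(y)$ is the inclusion-maximum down-set, and $x\ne y$ (otherwise every other element would be a twin of $x$). The no-twin hypothesis forbids any critical pair of the form $(z,x)$ or $(y,z)$: such a $z$ would be forced to share both $D$ and $U$ with $x$, respectively $y$. Hence every critical pair of $P$ meeting $\{x,y\}$ has the form $(x,z)$ or $(z,y)$. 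Given a minimum realizer $\mathcal R$ of $P-\{x,y\}$, I would insert $x$ into each $L\in\mathcal R$ as late as possible and $y$ as early as possible, so that all reversals in $L$ are preserved; every critical pair of $P$ disjoint from $\{x,y\}$ is also critical in $P-\{x,y\}$ and is therefore reversed by the extended $\mathcal R$. Finally, adjoin one extra linear extension $\sigma$ of $P$ placing $y$ right after $D(y)$ and $x$ right before $U(x)$: this $\sigma$ reverses every critical pair $(x,z)$ (as $z\notin U(x)$ forces $z<x$ in $\sigma$) and every $(z,y)$ (similarly). The total size is $\dim(P-\{x,y\})+1$, so $\{x,y\}$ is removable.

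The principal obstacle is Case~2. One must check that the insertion procedure always yields a legitimate linear extension of $P$ (using $D(x)=U(y)=\emptyset$ to rule out conflicts between the ``late $x$'' and ``early $y$'' placements) and that the single extra extension $\sigma$ is well defined, namely that $|D(y)|+|U(x)|<|P|$ so the prescribed positions of $y$ and $x$ in $\sigma$ do not collide. The $\tpt$-free hypothesis of Theorem~\ref{thm:charint} is what makes $x$ and $y$ carry globally extremal up- and down-sets; this is what collapses the wrong-way critical pairs under the no-twin assumption and keeps the number of additional extensions required down to exactly one.
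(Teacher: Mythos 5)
Your overall strategy in Case~2 --- remove the element $x$ with the leftmost right endpoint and the element $y$ with the rightmost left endpoint, recycle a realizer of $P-\{x,y\}$, and pay one extra linear extension --- is the same one the paper uses for the fractional analogue (Lemma~\ref{lemma:interval}), but it has a genuine gap at exactly the point that lemma is careful about. Your $\sigma$ must place every $z\notin U(x)\cup\{x\}$ before $x$ and every $z\notin D(y)\cup\{y\}$ after $y$. If $P$ is not an antichain then $x<y$, and any $z\in I(x)\cap I(y)$ would then have to satisfy $z<x<y<z$ in $\sigma$: no such linear extension exists. Such a $z$ is precisely an isolated element of $P$ (an interval meeting both the leftmost-ending and the rightmost-starting interval meets every interval), and one isolated element can survive your no-twin reduction (e.g.\ a $3$-chain together with a single interval overlapping all three, attached to any twin-free interval order of dimension $3$), so the case is not vacuous; nor do your recycled extensions save these pairs, since inserting $x$ ``as late as possible'' does not force the isolated $z$ before $x$. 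Your stated worry about $\sigma$, namely that $|D(y)|+|U(x)|<|P|$, points at the wrong obstruction. The paper's proof of Lemma~\ref{lemma:interval} resolves exactly this issue by singling out $I=I(a)\cap I(b)$ (observing it consists of isolated elements), placing it in a fixed order at the very front of every recycled extension and in reversed order at the very end of the one new extension, so that the pairs $(a,z)$ are reversed by the recycled extensions and the pairs $(z,b)$ by the new one. Alternatively, the remark following that lemma offers the cleaner repair: reduce first to posets indecomposable under lexicographic sum, which eliminates isolated elements (and twins), after which your $\sigma$ is a legitimate linear extension and your argument goes through.

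There is also a smaller but real slip in Case~1: if you insert $x$ immediately before $y$ in \emph{every} linear extension of $P-x$, the critical pair $(x,y)$ --- which is critical, since twins satisfy $D(x)\subseteq D(y)$ and $U(y)\subseteq U(x)$ --- is never reversed, as that requires $y<x$ in some extension. The standard repair is to insert $x$ just after $y$ in one extension and just before $y$ in the others, which is available because you have already arranged $\dim(P-x)\geq\dim(P)-1\geq 2$. With that fix Case~1 is fine; the missing idea is the treatment of the elements incomparable to both extremal points in Case~2.
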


\section{The existence of a removable pair}

The following lemma is the fractional analogue of Theorem~\ref{thm:rpio}.

\begin{lemma}\label{lemma:interval}
Let $P$ be an interval order on at least $3$ points. Then there exist points
$a,b\in P$ such that $\fdim(P-a-b)\geq\fdim(P)-1$.
\end{lemma}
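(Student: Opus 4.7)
My plan is to prove $\fdim(P) \leq \fdim(P-a-b) + 1$ for a suitable pair $a, b \in P$ by extending an optimal f-realizer of $P - a - b$ to an f-realizer of $P$ at the cost of at most one extra unit of weight. By Theorem~\ref{thm:charint}, I fix an interval representation $\{I_x = [\ell_x, r_x] : x \in P\}$ of $P$ and choose $a$ to be an element with smallest left endpoint (so $a \in \min(P)$) and $b$ an element with largest right endpoint (so $b \in \max(P)$). Under this choice, a direct calculation with the endpoints shows that, under mild genericity assumptions, the critical pairs of $P$ involving $a$ or $b$ take the forms $(a, y)$ with $y \| a$ and $r_y \geq r_a$, and $(z, b)$ with $z \| b$ and $\ell_z \leq \ell_b$.

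Starting from an optimal f-realizer $\mathcal{R}' = \sum_i \alpha_i L_i'$ of $P - a - b$, I extend each $L_i'$ to a linear extension $L_i$ of $P$ by inserting $a$ and $b$ in positions consistent with $<_P$. Any critical pair $(x, y)$ of $P$ with $x, y \in P - a - b$ is still a critical pair of $P - a - b$, because the inclusions $D(x) \subseteq D(y)$ and $U(y) \subseteq U(x)$ are preserved under restricting to $P-a-b$. Hence the extended family $\{(L_i, \alpha_i)\}$ reverses every such pair, using total weight $\fdim(P - a - b)$. What remains is to reverse the residual critical pairs---those involving $a$ or $b$---using total additional weight at most one. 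I will exploit two sources of flexibility: the choice of insertion positions for $a$ and $b$ within each $L_i$, and the possibility of adding a small number of further weighted linear extensions of $P$. In particular, placing $a$ just before $U(a)$ in an extension automatically reverses every $(a, y)$ critical pair (since $y \| a$ forces $y$ to precede $a$), and placing $b$ just after $D(b)$ automatically reverses every $(z, b)$.

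The main obstacle is the conflicting case: when $a <_P b$ and there is an element $y$ for which both $(a, y)$ and $(y, b)$ are critical, no single linear extension of $P$ can place $y$ simultaneously before $a$ and after $b$. Here the fractional nature of the problem is essential. I will split each $L_i$'s weight $\alpha_i$ into two complementary extensions---one prioritizing the late placement of $a$ (handling the $(a, y)$ pairs), the other prioritizing the early placement of $b$ (handling the $(z, b)$ pairs)---and then add a single auxiliary linear extension $M$ of weight one, specifically designed to cover the pairs left uncovered by either split. Verifying that the resulting weights always balance so that precisely one additional unit suffices, across all configurations of critical partners $y$ in the interval order, is the technical heart of the proof. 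Some further care is needed when the extremal choices for $a$ and $b$ are not unique or when additional critical pairs of the forms $(z, a)$ and $(b, w)$ arise, but these are amenable to the same framework after refining the choice of $a, b$ within the tied elements.
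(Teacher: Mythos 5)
There is a genuine gap: what you have written is a plan whose hardest step is explicitly deferred. You yourself flag the verification that ``the resulting weights always balance'' as the technical heart, and you relegate the critical pairs of the forms $(z,a)$ and $(b,w)$ to ``further care,'' but neither the splitting construction, nor the auxiliary extension $M$, nor the check that every residual critical pair accumulates total weight at least $1$ is carried out. Several of the concrete claims you do make are also unsound. First, your choice of $a$ (smallest left endpoint) and $b$ (largest right endpoint) does not control the structure of $I(a)$ and $I(b)$: with $a=[0,10]$, $x=[1,2]$, $y=[3,4]$ the element $y\in I(a)$ is not minimal, and your claimed classification of the residual critical pairs fails (nor may you impose ``mild genericity assumptions'' on a poset you are handed). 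Second, the assertion that inserting $a$ into $L_i$ just before $U(a)$ reverses every critical pair $(a,y)$ because ``$y\|a$ forces $y$ to precede $a$'' is false: $y$ may be incomparable to some $u\in U(a)$ and occur after $u$ in $L_i$, in which case $a$ lands before $y$. Third, the weight accounting does not obviously close: halving each $\alpha_i$ covers each family of residual pairs with weight only $\fdim(P-a-b)/2$ from the split, while a single extension $M$ of weight $1$ cannot simultaneously reverse conflicting leftovers (e.g.\ a pair $(z,a)$ wanting $a$ early against a pair $(a,y)$ wanting $a$ late), so no argument shows every total reaches $1$.

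The missing idea is a sharper choice of the pair. The paper takes $a$ to be an interval whose \emph{right} endpoint is leftmost and $b$ one whose \emph{left} endpoint is rightmost (the opposite extremes from yours); then $I(a)\subseteq\min(P)$, $I(b)\subseteq\max(P)$, and $I=I(a)\cap I(b)$ consists of isolated points. One arranges each extended $L_i'$ as $I<a<\cdots<b$ with $I$ in a fixed order, and adds a \emph{single} extra extension $I(a)\setminus I<a<R<b<I(b)\setminus I<I$ with $I$ reversed; every critical pair touching $\{a,b\}\cup I$ is then reversed either with weight $\sum\alpha_i\geq1$ in the $L_i'$ or with weight $1$ in the extra extension, and no splitting of weights is needed. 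If you want to salvage your approach, replace your extremal choice by this one and the case analysis collapses.
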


\begin{proof}
If $P$ is an antichain, then $\fdim(P)=\dim(P)=2$, so the statement follows.
Otherwise let $a<b$ two elements of $P$ such that $a\in\min(P)$, $b\in\max(P)$,
and $I(a)\subseteq\min(P)$, $I(b)\subseteq\max(P)$. Such elements always exist:
consider a representation, and choose $a$ to be an interval whose right
endpoint is the leftmost, and $b$ to be an interval whose left endpoint is the
rightmost. The fact that $P$ is not an antichain ensures $a<b$.

Let $I=I(a)\cap I(b)$, and $R=P\setminus(\{a,b\}\cup I(a)\cup I(b))$. Note that
$I$ consists of isolated elements.  Let $Q=P-\{a,b\}$, and let $\sum\alpha_i
L_i$ be an f-realizer of $Q$ of weight $\fdim(Q)$. For each $i$, we will modify
$L_i$ to get a linear extension $L_i'$ of $P$ as follows. Let $L_i$ be such
that in $L_i$ we have $I<a<L<b$, where $I$ is ordered the same way for each
$i$, and $L$ is an ordered set of elements of $P\setminus(\{a,b\}\cup I)$, such that
the ordering preserves that of $L_i$.  We construct one additional linear
extension $L$ so that $I(a)\setminus I<a<R<b<I(b)\setminus I<I$, and the
ordering of $I$ is the reverse of that of the $L_i$'s.

We claim that $\sum\alpha_i L_i'+L$ is an f-realizer of $P$. Critical pairs not
involving elements of $I\cup\{a,b\}$ are obviously reversed, and the rest are
reversed because $\sum \alpha_i\geq 1$. This shows $\fdim(P)\leq\fdim(Q)+1$.
\end{proof}

Note that the lemma above can be proven using more of existing machinery.
Proceeding by induction on $|P|$, it is sufficient to prove that the statement
holds in case $P$ is not an antichain and $P$ is indecomposable with respect to
lexicographic sum. Using this, we may assume that in the above argument $I=\emptyset$,
and then $L$ may be defined by $I(a) < a < R < b < I(b)$.

Also note that Trotter \cite{Tro-97} gave a proof of the Removable Pair Conjecture for
interval orders, and that proof can be translated to work with fractional
dimension.

\begin{theorem}
Let $P$ be a poset with at least $3$ elements. There exists a pair whose
removal decreases the fractional dimension by at most $1$.
\end{theorem}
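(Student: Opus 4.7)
The plan is to split the argument based on whether $P$ is an interval order. If $P$ is an interval order, Lemma~\ref{lemma:interval} immediately gives the desired removable pair. Otherwise, by Theorem~\ref{thm:charint}, $P$ contains a copy of $\tpt$: elements $a_1<b_1$ and $a_2<b_2$ with all four cross-pairs incomparable. I would aim to show that for a suitable (extremal) choice of this $\tpt$ copy, the incomparable across-pair $\{a_1,b_2\}$ is removable.

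To verify this, I would set $Q = P - a_1 - b_2$ and take an optimal f-realizer $\sum_i\alpha_iL_i$ of $Q$, then build an f-realizer of $P$ of weight $\fdim(Q)+1$ as follows: first, lift each $L_i$ to a linear extension $L_i'$ of $P$ by inserting $a_1$ just before $b_1$ and $b_2$ just after $a_2$; second, adjoin a single extra linear extension $L^*$ of $P$ that places $a_1$ as late as possible and $b_2$ as early as possible, subject to $a_1<b_1$ and $a_2<b_2$. This mirrors the construction in Lemma~\ref{lemma:interval}: critical pairs not involving $a_1$ or $b_2$ are reversed by the $L_i'$'s with total weight at least $1$, while those involving $a_1$ or $b_2$ must be covered either by the placement of $a_1,b_2$ in the $L_i'$'s or by $L^*$.

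The main obstacle is ensuring that a \emph{single} extra linear extension $L^*$ simultaneously reverses every critical pair involving $a_1$ or $b_2$ not already reversed by some $L_i'$. This likely requires a careful extremal choice of the $\tpt$ copy---for instance, choosing $a_1$ to minimize $|U(a_1)|$ and $b_2$ to minimize $|D(b_2)|$, or requiring $a_1\in\min(P)$ and $b_2\in\max(P)$ whenever possible---so that the constraints on $L^*$ are consistent. I expect the full argument to proceed by strong induction on $|P|$, with the base case $|P|=3$ essentially trivial. In the inductive step, if no suitable $L^*$ exists for any choice of $\tpt$, the hope is that $P$ must then contain a point $x$ with $\fdim(P-x) = \fdim(P)$; the induction hypothesis applied to $P-x$ yields a removable pair $\{y,z\}$, which remains removable in $P$ by continuity, since $\fdim(P-y-z) \geq \fdim(P-x-y-z) \geq \fdim(P-x)-1 = \fdim(P)-1$.
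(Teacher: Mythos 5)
Your reduction to the non-interval-order case via Lemma~\ref{lemma:interval} and the extraction of a copy of $\tpt$ match the paper, but the core of your argument has a genuine gap: you commit in advance to the single pair $\{a_1,b_2\}$ and hope that one extra linear extension $L^*$ can reverse every critical pair through $a_1$ or $b_2$ that the lifted extensions miss. The constraints on such an $L^*$ are in general inconsistent. If $(x,a_1)$ and $(a_1,y)$ are both critical pairs with $x<y$ in $P$ --- which happens, e.g., when $x<y<b_1$ with $a_1$ incomparable to $x$ and $y$ and $U(y)=U(a_1)=\{b_1\}$, $D(x)=D(a_1)=\emptyset$ --- then reversing both in one linear extension would force $y<a_1<x$, which is impossible. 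The lifted $L_i'$ offer no guaranteed help, since they come from an f-realizer of $P-a_1-b_2$ that constrains nothing about where $a_1$ and $b_2$ may be inserted; and no argument is given that an ``extremal'' choice of the $\tpt$ copy removes the conflict. Your fallback --- that if no $L^*$ works then some point $x$ has $\fdim(P-x)=\fdim(P)$ --- is asserted without proof and is false in general (for the standard examples $S_n$, removing any point drops the fractional dimension). In effect you have reproduced the difficulty that kept the conjecture open: nobody knows how to name a specific removable pair.

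The paper's key idea, absent from your proposal, is to refuse to choose. It considers both diagonal pairs at once, $P_1=P-\{a_1,b_2\}$ and $P_2=P-\{a_2,b_1\}$, takes optimal f-realizers of each scaled by $\frac12$, lifts them to $P$ arbitrarily, and adds two Bogart-style linear extensions $L_1,L_2$, each with coefficient $\frac12$. A critical pair meeting $\{a_1,a_2,b_1,b_2\}$ in exactly one point is reversed at least once by the f-realizer of whichever $P_i$ still contains both of its elements (contributing $\frac12$ after scaling) and by one of $L_1,L_2$ (contributing the other $\frac12$); the pairs $(a_1,b_2)$ and $(a_2,b_1)$ are handled similarly. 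This yields $\frac12\fdim(P_1)+\frac12\fdim(P_2)+1\geq\fdim(P)$, so at least one of the two pairs is removable. That averaging step is precisely what sidesteps the obstacle your single $L^*$ runs into, and without it (or a genuinely new idea) your outline does not close.
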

\begin{proof}
By Lemma~\ref{lemma:interval} we may assume that $P$ is not an interval order,
so by Theorem~\ref{thm:charint} $P$ has elements $a_1$, $b_1$, $a_2$, and
$b_2$, such that $a_1<b_1$, $a_2<b_2$, and no other comparabilities between any
two of these.

Let $P_1=P-\{a_1,b_2\}$, $P_2=P-\{a_2,b_1\}$. Let $\sum\alpha_i L_i$, and
$\sum\beta_i M_i$ f-realizers of $P_1$ and $P_2$ of weights $\fdim(P_1)$ and
$\fdim(P_2)$, respectively. We extend the linear extensions $L_i$ and $M_i$ for
each $i$ by inserting the missing elements $a_1,b_2$ or $a_2,b_1$ at arbitrary
valid positions to get $L_i'$ and $M_i'$, respectively.

Define $D^-(b_1)=D(b_1)\setminus D[a_1]$, and $D^-(b_2)=D(b_2)\setminus D[a_2]$,
and similarly, $U^-(a_1)=U(a_1)\setminus U[b_1]$, and $U^-(a_2)=U(a_2)\setminus
U[b_2]$.
Let
\begin{align*}
L_1 &= D(a_1) < a_1 < D^-(b_1) < b_1 < R_1 < a_2 < U^-(a_2) < b_2 < U(b_2)\\
L_2 &= D(a_2) < a_2 < D^-(b_2) < b_2 < R_2 < a_1 < U^-(a_1) < b_1 < U(b_1)
\end{align*}
linear extensions, where $R_1$ and $R_2$ represent the rest of the elements as
appropriate. These are exactly the linear extensions that appear in an article
by Bogart \cite{Bog-73}, where he proves their existence in a more general
setting.
We claim that
\begin{equation}\label{eq:lincomb}
\frac{1}{2}\sum\alpha_i L_i'+\frac{1}{2}\sum\beta_i
M_i'+\frac{1}{2}L_1+\frac{1}{2}L_2
\end{equation}
is an f-realizer of $P$.

Indeed, every critical pair $(x,y)$ is reversed in at least two terms of
(\ref{eq:lincomb}). If neither of $x,y$ is in $\{ a_1, a_2, b_1, b_2 \}$ then
it is reversed in the first and the second term. The critical pair $(a_1,b_2)$
is reversed in the second and fourth term, and the pair $(a_2,b_1)$ is reversed
in the first and third term.

It remains to be seen that $(x,y)$ gets reversed if exactly one of $x$ and $y$
is in the set $\{a_1,a_2,b_1,b_2\}$. Up to symmetry, there are four such critical
pairs $(x,a_1)$, $(a_1,y)$, $(x,b_2)$, $(b_2,y)$. All of them get reversed in
the second term, and they get reversed in $L_1,L_2,L_2,L_1$, respectively.

We have shown that
\[
\frac{1}{2}\fdim(P_1)+\frac{1}{2}\fdim(P_2)+1\geq\fdim(P),
\]
which implies that at least one of $P_1$ or $P_2$ has fractional dimension at
least $\fdim(P)-1$.
\end{proof}

\section{Acknowledgment}
The authors would like to thank the referees for valuable suggestions on
improvements of the paper. 

\ifthenelse{\boolean{amsart}}{
\bibliographystyle{amsplain}
\bibliography{bib,extra}
}
{
\providecommand{\bysame}{\leavevmode\hbox to3em{\hrulefill}\thinspace}
\providecommand{\MR}{\relax\ifhmode\unskip\space\fi MR }
\providecommand{\MRhref}[2]{%
  \href{http://www.ams.org/mathscinet-getitem?mr=#1}{#2}
}
\providecommand{\href}[2]{#2}

}

\end{document}